\documentclass[11pt,reqno]{amsart}
\usepackage[cp1251]{inputenc}
\usepackage[english,russian]{babel}
\usepackage{amsmath}
\usepackage{amssymb}
\usepackage{amsfonts}
\usepackage{graphicx}
\usepackage{xcolor}
\usepackage[colorlinks]{hyperref}

\usepackage{titlesec}
\titlespacing{\section}{0pt}{2.5ex}{1.5ex}
\titlespacing{\subsection}{0pt}{1.5ex}{1ex}
\titlespacing{\subsubsection}{0pt}{1.5ex}{1ex}
\titleformat{\section}{\large\bfseries\centering}{\thesection}{1em}{}
\titleformat{\subsection}[runin]{\bfseries}{\thesubsection.}{0.5em}{}[.\mbox{\ }]
\titleformat{\subsubsection}[runin]{\bfseries}{\thesubsubsection.}{0.4em}{}[.\mbox{\ }]

\newtheorem{proposition}{Proposition}
\newtheorem{question}{Question}

\begin{document}
\renewcommand{\refname}{References}
\renewcommand{\proofname}{Proof.}
\renewcommand{\figurename}{Fig.}
\renewcommand{\tablename}{Table}

\thispagestyle{empty}

\begin{center}
{\bf \LARGE  On cubic graphs having the maximal \\[1mm] coalition number}

\bigskip

Andrey A. Dobrynin$^{1}$  and  Hamidreza Golmohammadi$^{1,2}$  

\bigskip

$^{1}$ {\it Sobolev Institute of Mathematics,  Siberian Branch of the Russian \\ Academy of Sciences,
pr. Koptyuga, 4, 630090, Novosibirsk, Russia}

$^{2}$ {\it Novosibirsk State University,  Pirogova str., 2, 630090, Novosibirsk, Russia}

{\rm dobr@math.nsc.ru, h.golmohammadi@g.nsu.ru}
\end{center}

\bigskip

%\begin{quote}
\noindent{\bf Abstract.}
{\small
A coalition in a graph $G$ with a vertex set $V$  consists of two disjoint sets
$V_1, V_2\subset V$, such that neither $V_1$ nor $V_2$ is a dominating set, but the union
$V_1\cup V_2$ is a dominating set in $G$.
A partition of graph vertices is called a coalition partition $\mathcal{P}$
if every non-dominating set of $\mathcal{P}$ is a member of a coalition,
and every dominating set is a single-vertex set.
The coalition number $C(G)$ of a graph $G$ is the maximum cardinality of its coalition partitions. 
It is known that for cubic graphs $C(G) \le 9$.  The existence of cubic graphs with
the maximum coalition number is an unsolved problem.
In this paper, an infinite family of cubic graphs satisfying $C(G)=9$ is constructed.\medskip

\noindent{\bf Keywords:} dominating set, coalition number, cubic graph.
}
% \end{quote}

\bigskip

\section{Introduction}
Throughout this paper, $G = (V, E)$  denotes an undirected, simple, and connected graph.
The vertex set of $G$ is denoted by $V(G)$, and the cardinality of $V(G)$
is called the order of $G$. The maximum degree of vertices of a graph $G$ is denoted by $\Delta(G)$.
A set $S \subseteq V$ is a dominating set if every vertex of $V-S$ is adjacent to at least one vertex in $S$.
Domination in graphs is a well-studied topic in graph theory, and the bibliography on this subject has been surveyed
in \cite{13,14}. There are different kinds of dominating sets, which have been explored, such as total,
connected, independent, double dominating sets, and so on \cite{5a,6,7,8a,15}.

A coalition is generally defined as a temporary alliance of two or more (political) parties to
work together to achieve a common goal.
In 2020, inspired by the idea that the union of two sets should have a property that neither
set has, the notion of coalitions in graphs have been introduced \cite{9},  and have subsequently 
been studied, for example,
in \cite{1,5,10,11,12}. A coalition in a graph $G$ consists of two disjoint sets of vertices
 $V_1$ and $V_2$, such that neither
$V_1$ nor $V_2$ is a dominating set but the union $V_1\cup V_2$ is a dominating set of $G$.
A coalition partition in a graph $G$ of order $n$ is a vertex partition
$\mathcal{P}=\{V_1, V_2,\dots, V_k\}$, such that every set $V_i$ either is a dominating
set consisting of a single-vertex of degree $n-1$, or is not a dominating set but
forms a coalition with another set $V_j$, which is not a dominating set.
The  coalition number $C(G)$ of a graph $G$ equals the maximum order $k$ of a coalition partition.
Coalitions in graphs, based on various types of dominating sets, have been recently
studied \cite{2,3,4,8}.

Haynes et al. \cite{11} proved that, for any graph, $C(G)\leq\ (\Delta(G)+3)^2/4$.
For cubic graphs, the above bound yields $C(G) \leq 9$.
Alikhani, Golmohammadi, and Konstantinova have found
the coalition numbers of cubic graphs of the order at most 10 \cite{1}.
They showed that $C(G) \in \{6, 7, 8 \}$ for these graphs and asked the following question.

\begin{question}
 Is it true that, for any cubic graph $G$ of order at least 6,
 its coalition number $C(G) \in \{6, 7, 8 \}$?
\end{question}

In this paper, we demonstrate that the question has the negative answer  by 
constructing an infinity family of cubic graphs with $C(G)=9$.

\section{Main result}

The main goal of this section is to present an infinite family of cubic graphs with coalition number 9.
In order to achieve this goal, we first need to find a cubic graph $G$ with $C(G)=9$.
The computational search shows that the earliest examples appear in cubic graphs of order 16.
Among 4060 cubic graphs of order 16, there are precisely 14 graphs satisfying this property.
Their diagrams are depicted in Fig.~\ref{Fig1}.
The vertices of each single-vertex set of coalition partitions are shown in black,
while the vertices of the other sets have distinct colors.
All non-one-vertex sets of coalition partitions are presented near the cor\-res\-pon\-ding graph.
It is not hard to verify that these sets form coalition partitions.

\begin{proposition} \label{5}
For any $n \ge 16$, there are cubic graphs of order $n$
with coalition number 9.
%having $C(G)=9$.
\end{proposition}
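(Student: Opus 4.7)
The plan is to take one of the 14 cubic graphs $H$ of order $16$ displayed in Fig.~\ref{Fig1} (together with its explicit coalition partition $\mathcal{P}=\{V_1,\ldots,V_9\}$ witnessing $C(H)=9$) and to enlarge it by $2$ vertices at a time using a cubic-preserving gadget that lifts $\mathcal{P}$ to a coalition partition of the larger graph. Iterating this yields cubic graphs of every even order $n\ge 16$, which covers the statement since every cubic graph has even order.

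The gadget $\Gamma$ I intend to use is the standard ``H-insertion'': choose two disjoint edges $e_1=uv$ and $e_2=xy$ of the current graph, subdivide $e_1$ with a new vertex $a$ and $e_2$ with a new vertex $b$, and then add the edge $ab$. This preserves cubicness and increases the order by exactly $2$. The partition is extended by placing $a$ in some class $V_\alpha$ and $b$ in some class $V_\beta$; all other classes are left alone. The resulting partition $\mathcal{P}'$ has size $9$, so it suffices to check that it is a coalition partition.

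Validating $\mathcal{P}'$ reduces to a finite list of hitting-type conditions. First, no class of $\mathcal{P}'$ may be dominating: for classes other than $V_\alpha$ and $V_\beta$ this is automatic (any vertex of $H$ they already failed to dominate is still undominated), and for $V_\alpha\cup\{a\}$ (respectively $V_\beta\cup\{b\}$) it amounts to requiring that $V_\alpha$ leaves undominated in $H$ some vertex outside $\{u,v\}$ (respectively $V_\beta$ outside $\{x,y\}$). Second, each coalition pair $V_i\cup V_j$ recorded for $\mathcal{P}$ must still dominate the two new vertices $a,b$ in $H'$; equivalently, $V_i\cup V_j$ meets $\{u,v\}$ (or contains $a$ via $V_\alpha$ or $b$) and meets $\{x,y\}$ (or contains $b$ via $V_\beta$ or $a$).

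The main obstacle is the search in the previous paragraph: one must exhibit, in at least one of the 14 graphs of Fig.~\ref{Fig1}, a choice of $e_1,e_2,V_\alpha,V_\beta$ for which every coalition pair of $\mathcal{P}$ satisfies the hitting conditions simultaneously. Because each base graph carries an explicit partition and a small explicit list of coalition pairs, this is a bounded combinatorial verification; I expect at least one of the fourteen configurations to admit such a gadget placement. Once one such insertion is found, the same gadget can be iterated on fresh edges of the untouched core of $H$ (or on the newly created edges $ua,xb$, which inherit the hitting conditions), producing a cubic graph of order $16+2k$ with coalition number at least $9$ for each $k\ge 0$. The matching upper bound $C(G)\le 9$ for cubic graphs, quoted from Haynes et al.~\cite{11} in the introduction, then gives $C(G)=9$ and completes the proof.
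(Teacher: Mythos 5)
Your proposal is a plan rather than a proof: the decisive step --- exhibiting, in at least one of the fourteen base graphs of Fig.~\ref{Fig1}, a concrete pair of edges $e_1,e_2$ and classes $V_\alpha,V_\beta$ passing your hitting conditions --- is left as ``I expect at least one of the fourteen configurations to admit such a gadget placement''. That finite verification is exactly the mathematical content of the statement, and without it nothing has been proved. Moreover, your reduction of the verification is incomplete: subdividing $e_1=uv$ and $e_2=xy$ \emph{deletes} the adjacencies $uv$ and $xy$, so a coalition pair $V_i\cup V_j$ that dominated $v$ in $H$ only through $u$ (or $y$ only through $x$) can cease to dominate an \emph{old} vertex after the insertion; it is not enough that the pair reaches the new vertices $a,b$, so your ``equivalently, $V_i\cup V_j$ meets $\{u,v\}$ and meets $\{x,y\}$'' is not a correct restatement of the requirement. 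The same gap reappears in the iteration step: the claim that the newly created edges $ua,xb$ ``inherit the hitting conditions'' is asserted, not checked, and one must guarantee an unbounded supply of admissible edge pairs, which does not follow automatically from a single successful insertion.

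For comparison, the paper avoids an abstract search by making everything explicit: it takes two base graphs, $H$ of order $16$ and $G$ of order $18$ (the latter obtained from $H$ by subdividing the edges $(1,2)$ and $(2,3)$ and joining the new vertices), gives their coalition partitions $\mathcal{P}(H)$ and $\mathcal{P}(G)$ together with the list of coalition partners in Table~\ref{Tab1}, and then grows the graphs by repeatedly inserting a diamond $K_4-e$ into an edge at vertex $7$, which adds four vertices per step while the fixed colouring of the inserted vertices (Fig.~\ref{Fig2}) keeps the nine-class partition a coalition partition uniformly at every step; the two starting orders $16$ and $18$ then cover all even $n\ge 16$, and the bound $C(G)\le 9$ of \cite{11} gives equality. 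Your two-vertex gadget would, if completed, be slightly more economical (one base graph would suffice for all even orders), but to salvage the argument you must actually display an admissible placement, add the missing ``old vertices stay dominated'' conditions, and prove that the insertion can be repeated indefinitely --- for instance by showing the conditions hold for a periodic pattern, as the paper's diamond chain does implicitly.
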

\begin{proof}
Consider cubic graphs $H$ and $G$  of order 16 and 18 shown in Fig.~\ref{Fig2}.
The graph $G$ is obtained from $H$ by subdividing edges (1,2), (2,3)
and then connecting new vertices.
Their coalition partitions are presented below the corresponding graph. 
Since every black single-vertex set 
 is not adjacent to a vertex that is either red, green, or yellow, none of the sets
 of these partitions are dominating sets.
Coalition partners in partitions $\mathcal{P}(H)$ and $\mathcal{P}(G)$
are listed in Table~\ref{Tab1}.

\begin{table}[h]
\centering
\caption{Dominating sets in graphs $H$ and $G$.} \label{Tab1}
%\vspace{0.6mm} % additional space after caption
\begin{tabular}{r|r} \hline
\rule{0cm}{2mm}  % additional space after line
$\mathcal{P}(H)$ \ \ \ \ \ \ \  &  $\mathcal{P}(G)$ \ \ \ \ \ \ \ \  \\ \hline
$\{2\}    \cup  \{8, 9, 10\}$   & $\{1\} \cup \{2, 7, 11, 14\}$    \\
$\{11\}   \cup  \{3, 6, 16\}$   & $\{3\} \cup \{2, 7, 11, 14\}$   \\
$\{12\}   \cup  \{3, 6, 16\}$   & $\{9\} \cup \{4, 8, 10, 18\}$   \\
$\{13\}   \cup  \{4, 5, 15\}$   & $\{12\} \cup \{5, 6, 16, 17\}$  \\
$\{14\}   \cup  \{4, 5, 15\}$   & $\{13\} \cup \{4, 8, 10, 18\}$  \\
$\{1, 7\} \cup  (8, 9, 10)$     & $\{15\} \cup \{5, 6, 16, 17\}$  \\
\hline
\end{tabular}
\end{table}

\newpage

\begin{figure}[h!]
\begin{minipage}[h]{0.49\linewidth}
\center{\includegraphics[height= 35mm,width=\linewidth]{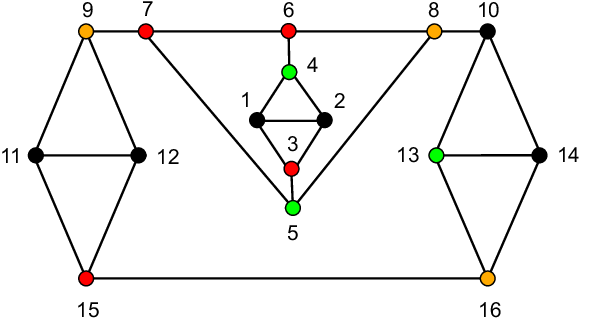}} \\
{\small $G_1$: (3  6  7 15) (4 5 13) (8  9 16)}
\end{minipage}
\hfill
\begin{minipage}[h]{0.49\linewidth}
\center{\includegraphics[width=\linewidth]{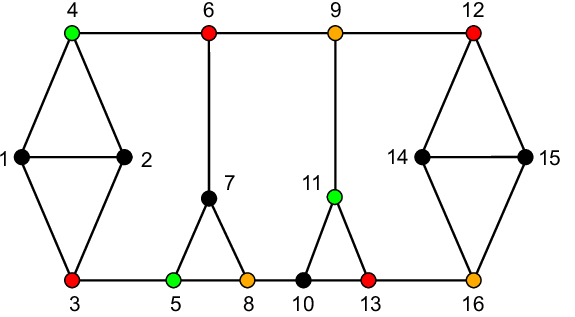}} \\
{\small $G_2$: (3  6  12 13) (4 5 11) (8 9 16)}
\end{minipage}
\vspace{4mm}
\vfill
\begin{minipage}[h]{0.49\linewidth}
\center{\includegraphics[width=\linewidth]{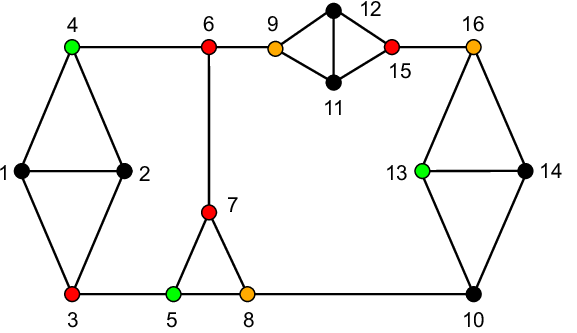}} \\
{\small $G_3$: (3  6  7 15) (4 5 13) (8 9 16)}
\end{minipage}
\hfill
\begin{minipage}[h]{0.49\linewidth}
\center{\includegraphics[width=1.05\linewidth]{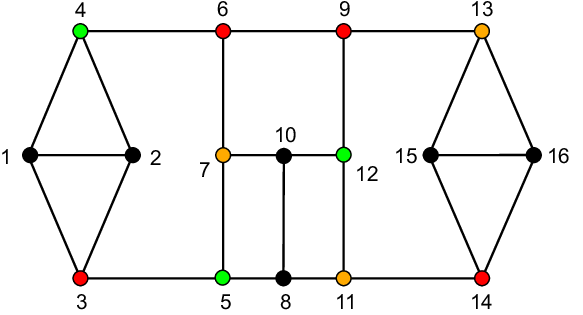}} \\
{\small $G_4$: (3  6  9 14) (4 5 12) (7 11 13)}
\end{minipage}
\vspace{4mm}
\vfill
\begin{minipage}[h]{0.49\linewidth}
\center{\includegraphics[width=\linewidth]{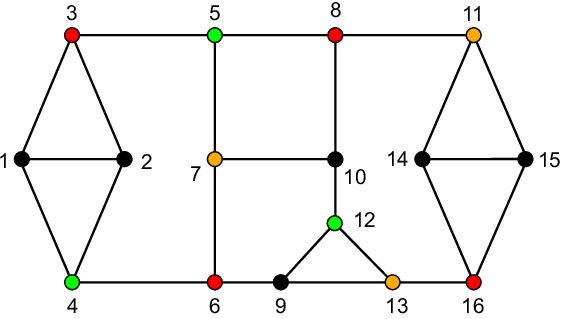}} \\
{\small $G_5$: (3  6  8 16) (4 5 12) (7 11 13)}
\end{minipage}
\hfill
\begin{minipage}[h]{0.49\linewidth}
\center{\includegraphics[width=\linewidth]{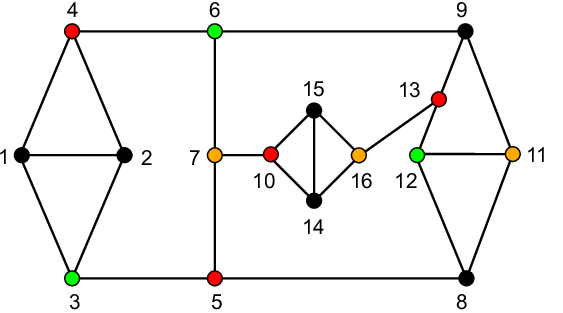}} \\
{\small $G_6$: (3  6  12) (4 5 10 13) (7 11 16)}
\end{minipage}
\vfill
\vspace*{2mm}
\begin{minipage}[h]{0.49\linewidth}
\center{\includegraphics[height= 36mm,width=\linewidth]{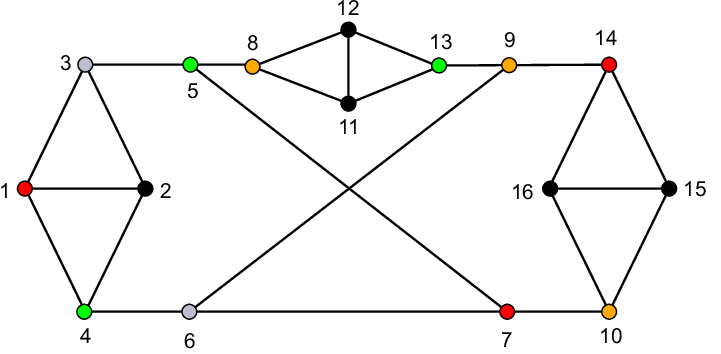}} \\
{\small $G_7$: (1 7 14) (3 6) (4 5 13) (8 9 10)}
\end{minipage}
\hfill
\begin{minipage}[h]{0.49\linewidth}
\center{\includegraphics[width=\linewidth]{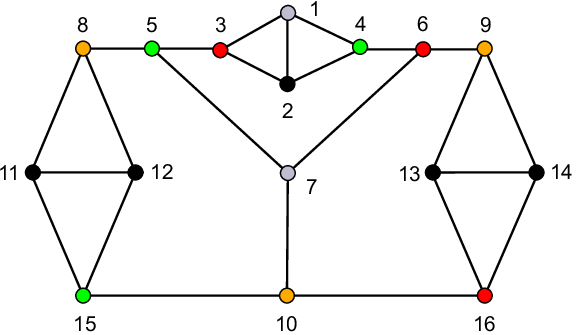}} \\
{\small $G_8$: (1 7) (3 6 16) (4 5 15) (8 9 10)}
\end{minipage}
%\vspace{4mm}
\caption{Cubic graphs of order 16 with $C(G)=9$ and
 their coalition partitions (without single-vertex sets).}
\label{Fig1}
\end{figure}

\newpage

\setcounter{figure}{0}

\begin{figure}[h!]
\begin{minipage}[h]{0.49\linewidth}
\center{\includegraphics[width=\linewidth]{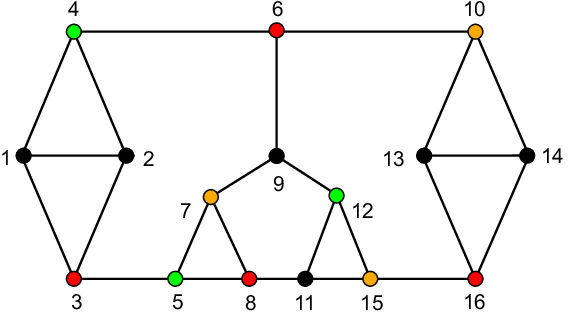}} \\
{\small $G_9$: (3  6  8 16) (4 5 12) (7 10 15)}
\end{minipage}
\hfill
\begin{minipage}[h]{0.49\linewidth}
\center{\includegraphics[width=\linewidth]{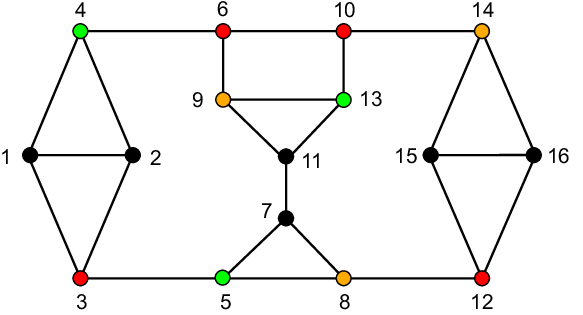}} \\
{\small $G_{10}$: (3  6  10 12) (4 5 13) (8 9 14)}
\end{minipage}
\vspace{4mm}
\vfill
\begin{minipage}[h]{0.49\linewidth}
\center{\includegraphics[width=\linewidth]{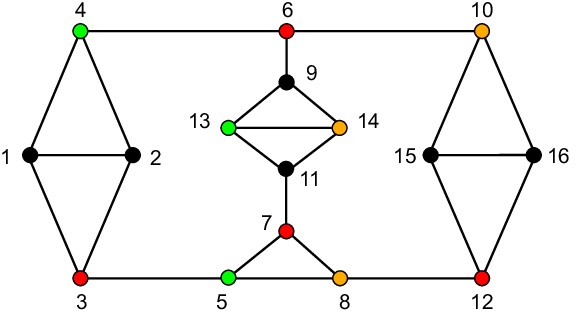}} \\
{\small $G_{11}$: (3  6  7 12) (4 5 13) (8 10 14)}
\end{minipage}
\hfill
\begin{minipage}[h]{0.49\linewidth}
\center{\includegraphics[width=\linewidth]{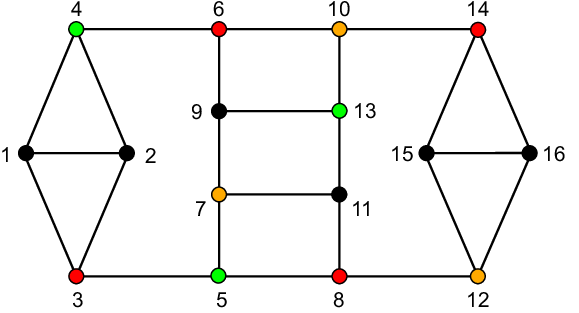}} \\
{\small $G_{12}$: (3  6  8 14) (4 5 13) (7 10 12)}
\end{minipage}
\vspace{4mm}
\vfill
\begin{minipage}[h]{0.49\linewidth}
\center{\includegraphics[height= 36mm,width=\linewidth]{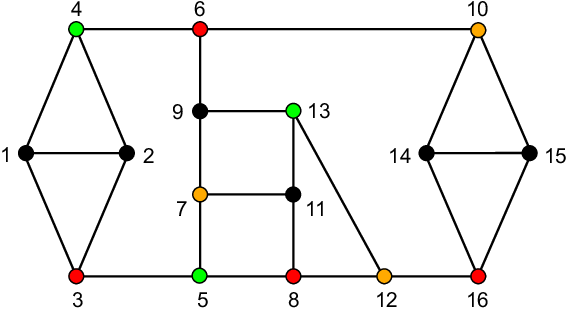}} \\
{\small $G_{13}$: (3  6  8 16) (4 5 13) (7 10 12)}
\end{minipage}
\hfill
\begin{minipage}[h]{0.49\linewidth}
\center{\includegraphics[width=\linewidth]{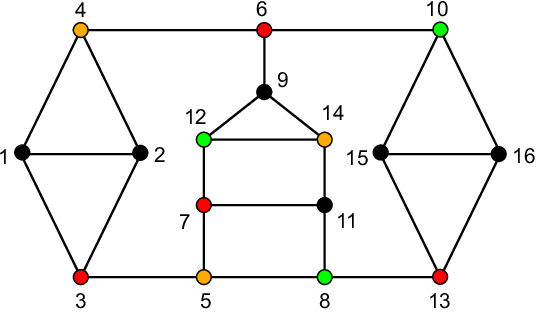}} \\
{\small $G_{14}$: (3  6  7 13) (4 5 14) (8 10 12)}
\end{minipage}
%\vspace{4mm}
\caption{Cubic graphs of order 16 with $C(G)=9$ and
 their coalition partitions (without single-vertex sets) ({\it continue}).}
%\label{Fig1}
\end{figure}

Graphs $H$ and $G$ are the initial graphs of a family of new cubic graphs 
with increasing order. 
Sequentially adding the diamond graph $K_4-e$ to $H$ or $G$ instead of an edge coming from vertex 7, allows one
to construct a cubic graph of an arbitrary order with the required property.
The process of inserting colored graph $K_4-e$ into $H$ and $G$ is illustrated in 
Fig.~\ref{Fig2}
(see graphs $H_1$ and $G_1$ obtained after two steps).
The colors of each vertex and its neighbors of $H$ and $G$ remain unchanged throughout this process.
The proof is complete.
\end{proof}

It is worth mentioning that other cubic graphs $G$ of order 18 with coalition number 9 can be also obtained from
graphs of order 16 in Fig.~\ref{Fig1}. For instance, one can subdivide edges (1,2) and (2,3) of graphs $G_3, G_7, G_{11}, G_{13}$
and then connect them. 
Using these graphs, as well as graphs of order 16 in 
Fig.~\ref{Fig1},
one can construct new infinite families of cubic graphs $G$ having $C(G)=9$.

%\newpage

%\

\begin{figure}[t]
\begin{minipage}[h]{\linewidth}
\center{\includegraphics[width=0.9\linewidth]{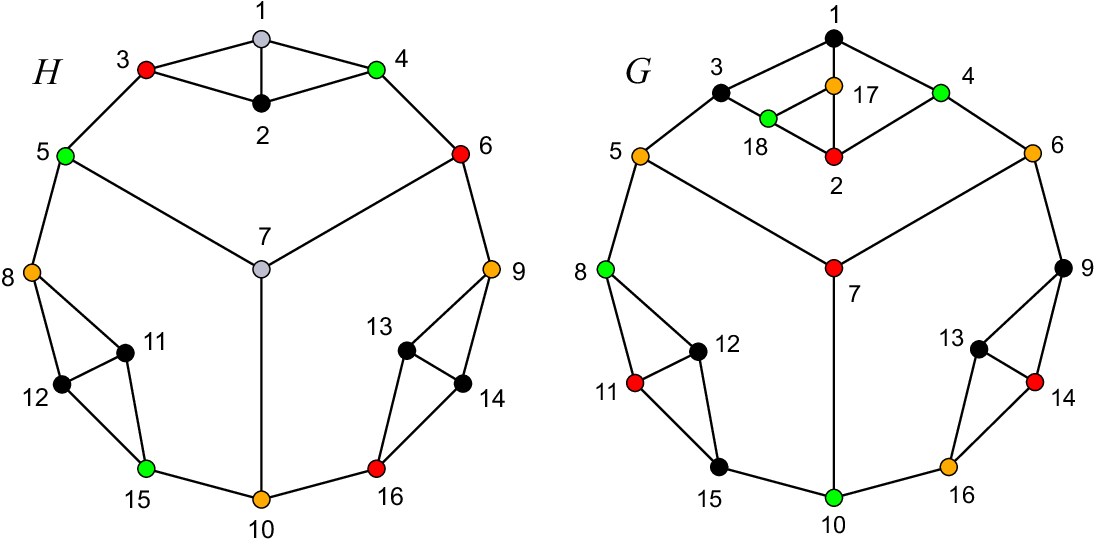}} \\
{\small $\mathcal{P}(H) = \{ (2)\ (11)\ (12)\ (13)\ (14)\ (1,7)\ (3, 6, 16)\  (4, 5, 15)\ (8, 9, 10) \}$} \\[1mm]
{\small $\mathcal{P}(G) = \{ (1)\ (3)\ (9)\ (12)\ (13)\ (15)\ (2, 7, 11, 14)\  (4, 8, 10, 18)\ (5, 6, 16, 17) \}$ }
\end{minipage}
\vspace*{2mm}
\vfill
\begin{minipage}[h]{\linewidth}
\center{\includegraphics[width=0.9\linewidth]{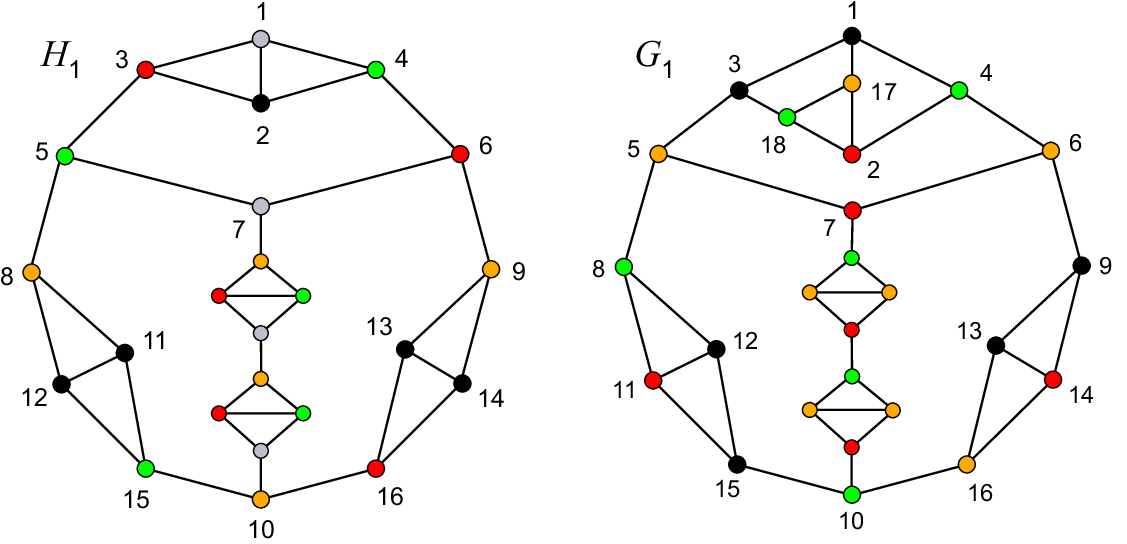}}
\end{minipage}
\caption{A family of cubic graphs with $C(G)=9$.}
\label{Fig2}
\end{figure}

Finally, we state the following question.

\begin{question}
Find $4$-regular graphs with maximum coalition number.
\end{question}

\section*{Acknowledgment}

The study of A.A.~Dobrynin was supported by the state contract
of the Sobolev Institute of Mathematics (project number FWNF-2022-0017)
and the work of Hamidreza Golmohammadi was supported by the
Mathematical Center in Akademgorodok, under agreement No. 075-15-2022-281 with
the Ministry of Science and High Education of the Russian Federation.

\bigskip

\end{document}